\documentclass[reqno]{amsart}


\usepackage{enumitem,mathrsfs,mleftright}
\usepackage[colorlinks]{hyperref}


\newtheorem{Lem}{Lemma}
\newtheorem{Thm}{Theorem}

\theoremstyle{definition}
\newtheorem{Def}{Definition}
\newtheorem{Rmk}{Remark}


\setlist[enumerate]{leftmargin = *}
\setlist[itemize]{leftmargin = *}

\allowdisplaybreaks

\frenchspacing


\newcommand{\A}{\mathscr{A}}
\newcommand{\C}{\mathbb{C}}
\newcommand{\D}{\displaystyle}
\newcommand{\M}[1]{\func{M}{#1}}
\newcommand{\R}{\mathbb{R}}
\newcommand{\U}[1]{\func{\mathcal{U}}{#1}}
\newcommand{\Z}{\mathbb{Z}}
\newcommand{\Ad}[1]{\func{\operatorname{Ad}}{#1}}
\newcommand{\Br}[1]{\( #1 \)}
\newcommand{\Cc}[1]{\func{C_{c}}{#1}}
\newcommand{\df}{\stackrel{\operatorname{df}}{=}}
\newcommand{\Id}{\operatorname{Id}}
\newcommand{\om}[2]{\func{\omega}{#1,#2}}
\newcommand{\Aut}[1]{\func{\operatorname{Aut}}{#1}}
\newcommand{\Int}[4]{\int_{#1} #2 ~ \mathrm{d}{\func{#3}{#4}}}
\newcommand{\Map}[4]{\mleft\{ \begin{matrix} #1 & \to & #2 \\ #3 & \mapsto & \D #4 \end{matrix} \mright\}}
\newcommand{\Set}[2]{\mleft\{ #1 ~ \middle| ~ #2 \mright\}}
\newcommand{\func}[2]{#1 \Br{#2}}
\newcommand{\Func}[2]{\func{\Br{#1}}{#2}}
\newcommand{\FUNC}[2]{\func{\SqBr{#1}}{#2}}
\newcommand{\Norm}[1]{\mleft\| #1 \mright\|}
\newcommand{\Pair}[2]{\Br{#1,#2}}
\newcommand{\Quad}[4]{\Br{#1,#2,#3,#4}}
\newcommand{\SqBr}[1]{\[ #1 \]}
\newcommand{\SSet}[1]{\mleft\{ #1 \mright\}}
\newcommand{\Trip}[3]{\Br{#1,#2,#3}}
\newcommand{\Cstar}[1]{\func{C^{\ast}}{#1}}
\newcommand{\alphArg}[2]{\func{\alpha_{#1}}{#2}}
\newcommand{\alphExt}[1]{\overline{\alpha}_{#1}}
\newcommand{\alphArgExt}[2]{\func{\overline{\alpha}_{#1}}{#2}}

\renewcommand{\(}{\mleft(}
\renewcommand{\)}{\mright)}
\renewcommand{\[}{\mleft[}
\renewcommand{\]}{\mright]}
\renewcommand{\c}{\mathsf{c}}
\renewcommand{\u}{\operatorname{u}}


\begin{document}



\title[Automatic Continuity of $ \ast $-Representations]{Automatic Continuity of $ \ast $-Representations for Discrete Twisted $ C^{\ast} $-Dynamical Systems}
\author{Leonard T. Huang}
\email{Leonard.Huang@Colorado.EDU}
\address{Department of Mathematics \\ University of Colorado at Boulder \\ Campus Box 395 \\ Boulder \\ Colorado 80309 \\ United States of America}
\subjclass[2010]{22D15, 22D20, 22D25, 47L55}

\maketitle



\begin{abstract}
In this paper, we will establish the relatively unknown result that every $ \ast $-representation for a discrete twisted $ C^{\ast} $-dynamical system $ \Quad{G}{A}{\alpha}{\omega} $ is automatically contractive with respect to the $ L^{1} $-norm on $ \Cc{G,A} $.
\end{abstract}



\section{Introduction}


Given a $ C^{\ast} $-dynamical system $ \A = \Trip{G}{A}{\alpha} $ with a Haar measure $ \mu $ tacitly put on $ G $, we may equip the vector space $ \Cc{G,A} $ of compactly-supported continuous $ A $-valued functions on $ G $ with a $ \ast $-algebraic structure by defining a convolution $ \star_{\A} $ and an involution $ ^{\ast_{\A}} $ as follows:
\begin{align*}
\forall f,g \in \Cc{G,A}: \quad
f \star_{\A} g & \df \Map{G}{A}{x}{\Int{G}{\func{f}{y} \alphArg{y}{\func{g}{y^{- 1} x}}}{\mu}{y}}; \\
f^{\ast_{\A}}  & \df \Map{G}{A}{x}{\func{\Delta_{G}}{x^{- 1}} \cdot \alphArg{x}{\func{f}{x^{- 1}}^{\ast}}}.
\end{align*}
We may then turn $ \Trip{\Cc{G,A}}{\star_{\A}}{^{\ast_{\A}}} $ into a normed $ \ast $-algebra with a norm $ \Norm{\cdot}_{\A,1} $ on $ \Cc{G,A} $ defined by
$$
\forall f \in \Cc{G,A}: \qquad
\Norm{f}_{\A,1} \df \Int{G}{\Norm{\func{f}{x}}_{A}}{\mu}{x}.
$$
We call $ \Norm{\cdot}_{\A,1} $ the \emph{$ L^{1} $-norm} for $ \A $.

A $ \ast $-representation for $ \A $ is now a pair $ \Pair{C}{\pi} $ consisting of a $ C^{\ast} $-algebra $ C $ and an algebraic $ \ast $-homomorphism $ \pi $ from $ \Trip{\Cc{G,A}}{\star_{\A}}{^{\ast_{\A}}} $ to $ C $, and we may define the crossed-product $ C^{\ast} $-algebra $ \Cstar{\A} $ as the completion of $ \Trip{\Cc{G,A}}{\star_{\A}}{^{\ast_{\A}}} $ for a norm $ \Norm{\cdot}_{\A,\u} $ on $ \Cc{G,A} $ --- called the \emph{universal norm} for $ \A $ --- defined by
\begin{align*}
& \forall f \in \Cc{G,A}: \\
&     \Norm{f}_{\A,\u}
  \df \func{\sup}{
                 \Set{\Norm{\func{\pi}{f}}_{C}}
                     {
                     \begin{matrix}
                     \text{$ \Pair{C}{\pi} $ is a $ \ast $-representation for $ \A $ that is} \\
                     \text{contractive with respect to $ \Norm{\cdot}_{\A,1} $ and $ \Norm{\cdot}_{C} $}
                     \end{matrix}
                     }
                 }.
\end{align*}
As far as we know, all treatments of crossed-product $ C^{\ast} $-algebras (e.g. \cite{W}) assume the contractivity condition to \emph{enforce} that $ \Norm{\cdot}_{\A,\u} $ is actually well-defined.

We can now ask: Is a $ \ast $-representation for a $ C^{\ast} $-dynamical system automatically contractive with respect to $ \Norm{\cdot}_{\A,1} $ and $ \Norm{\cdot}_{C} $? We know of no counterexamples, and we have been unable to find anything relevant to this problem in the literature.

We hope to advertise the problem by showing that every $ \ast $-representation for a discrete $ C^{\ast} $-dynamical system is automatically contractive. Actually, we will prove a better result: Every $ \ast $-representation for a discrete \emph{twisted} $ C^{\ast} $-dynamical system is automatically contractive.

For every $ C^{\ast} $-algebra $ A $, we will adopt the following notation:
\begin{itemize}
\item
$ \Aut{A} $ denotes the group of $ \ast $-automorphisms on $ A $.

\item
$ \M{A} $ denotes the multiplier $ C^{\ast} $-algebra of $ A $.

\item
$ \U{A} $ denotes the group of unitary elements of $ A $.
\end{itemize}



\section{Twisted $ C^{\ast} $-Dynamical Systems}



\begin{Def}[\cite{BS}] \label{Twisted C*-Dynamical System}
A \emph{twisted $ C^{\ast} $-dynamical system} is a $ 4 $-tuple $ \Quad{G}{A}{\alpha}{\omega} $ with the following properties:
\begin{enumerate}
\item
$ G $ is a locally compact Hausdorff topological group, with a Haar measure $ \mu $ on $ G $ tacitly assumed.

\item
$ A $ is a $ C^{\ast} $-algebra.

\item
$ \alpha $ is a strongly continuous map from $ G $ to $ \Aut{A} $, i.e.,
$$
\Map{G}{A}{x}{\alphArg{x}{a}}
$$
is a continuous map for each $ a \in A $.

\item
$ \omega $ is a strictly continuous map from $ G \times G $ to $ \U{\M{A}} $, i.e.,
$$
\Map{G \times G}{A}{\Pair{x}{y}}{a \om{x}{y}}
\qquad \text{and} \qquad
\Map{G \times G}{A}{\Pair{x}{y}}{\om{x}{y} a}
$$
are continuous maps for each $ a \in A $.

\item
$ \alpha_{e} = \Id_{A} $, and $ \om{e}{r} = 1_{\M{A}} = \om{r}{e} $ for all $ r \in G $.

\item
$ \alphExt{r} \circ \alphExt{s} = \Ad{\om{r}{s}} \circ \alphExt{r s} $ for all $ r,s \in G $, i.e.,
$$
\forall m \in \M{A}: \quad
\alphArgExt{r}{\alphArgExt{s}{m}} = \om{r}{s} \alphArgExt{r s}{m} \om{r}{s}^{\ast}.
$$
Here, $ \overline{\alpha} $ denotes the map from $ G $ to $ \Aut{\M{A}} $ that assigns to each $ r \in G $ the unique $ \ast $-automorphism on $ \M{A} $ that continuously extends $ \alpha_{r} $.

\item
$ \alphArgExt{r}{\om{s}{t}} \om{r}{s t} = \om{r}{s} \om{r s}{t} $ for all $ r,s,t \in G $.
\end{enumerate}
If $ G $ is discrete, then we call $ \Quad{G}{A}{\alpha}{\omega} $ a \emph{discrete twisted $ C^{\ast} $-dynamical system}.
\end{Def}


For the rest of this paper, $ \A = \Quad{G}{A}{\alpha}{\omega} $ is a twisted $ C^{\ast} $-dynamical system.



\begin{Rmk}
Our definition of a twisted $ C^{\ast} $-dynamical system differs from that in \cite{BS}, which merely assumes that $ \alpha: G \to \Aut{A} $ is strongly Borel-measurable and $ \omega: G \times G \to \U{\M{A}} $ is strictly Borel-measurable. Such generality is not needed in our setting because we are only interested in continuous maps.
\end{Rmk}



\begin{Def}[\cite{BS}] \label{Convolution and Involution}
Define a convolution $ \star_{\A} $ and an involution $ ^{\ast_{\A}} $ on $ \Cc{G,A} $ by
\begin{align*}
\forall f,g \in \Cc{G,A}: \quad
f \star_{\A} g & \df \Map{G}{A}{x}{\Int{G}{\func{f}{y} \alphArg{y}{\func{g}{y^{- 1} x}} \om{y}{y^{- 1} x}}{\mu}{y}}; \\
f^{\ast_{\A}}  & \df \Map{G}{A}{x}{\func{\Delta_{G}}{x^{- 1}} \cdot \om{x}{x^{- 1}}^{\ast} \alphArg{x}{\func{f}{x^{- 1}}^{\ast}}}.
\end{align*}
\textbf{Note:} $ \Trip{\Cc{G,A}}{\star_{\A}}{^{\ast_{\A}}} $ is thus a $ \ast $-algebra.
\end{Def}



\begin{Def}
A \emph{$ \ast $-representation} for $ \A $ is a pair $ \Pair{C}{\pi} $, where $ C $ is a $ C^{\ast} $-algebra and $ \pi $ an algebraic $ \ast $-homomorphism from $ \Trip{\Cc{G,A}}{\star_{\A}}{^{\ast_{\A}}} $ to $ C $.
\end{Def}



\section{The Main Result}


In this section, $ \A $ is a discrete twisted $ C^{\ast} $-dynamical system.

By Haar's Theorem, the only Haar measures on $ G $ are positive scalar multiples of the counting measure $ \c $, so we fix $ k \in \R_{> 0} $ and tacitly assume the measure $ k \cdot \c $. By \autoref{Convolution and Involution}, we now have the following rules for convolution and involution:
\begin{align*}
\forall f,g \in \Cc{G,A}: \quad
f \star_{\A} g & \df \Map{G}{A}{x}{\sum_{y \in G} k \cdot \func{f}{y} \alphArg{y}{\func{g}{y^{- 1} x}} \om{y}{y^{- 1} x}}; \\
f^{\ast_{\A}}  & \df \Map{G}{A}{x}{\om{x}{x^{- 1}}^{\ast} \alphArg{x}{\func{f}{x^{- 1}}^{\ast}}}.
\end{align*}
Note that because $ G $ is discrete, it is unimodular, i.e., $ \Delta_{G} \equiv 1 $.



\begin{Def}
For each $ a \in A $ and $ r \in G $, define the function $ a \bullet \delta_{r} $ in $ \Cc{G,A} $ by
$$
\forall x \in G: \qquad
\Func{a \bullet \delta_{r}}{x} \df
\begin{cases}
a     & \text{if} ~ x = r; \\
0_{A} & \text{if} ~ x \in G \setminus \SSet{r}.
\end{cases}
$$
\end{Def}



\begin{Lem} \label{Important Convolutive and Involutive Identities}
The following identities hold:
\begin{enumerate}
\item
$ \Br{a \bullet \delta_{r}} \star_{\A} \Br{b \bullet \delta_{s}} = \Br{k \cdot a \alphArg{r}{b} \om{r}{s}} \bullet \delta_{r s} $ for all $ a,b \in A $ and $ r,s \in G $.

\item
$ \Br{a \bullet \delta_{r}}^{\ast_{\A}} = \om{r^{- 1}}{r}^{\ast} \alphArg{r^{- 1}}{a^{\ast}} \bullet \delta_{r^{- 1}} $ for all $ a \in A $ and $ r \in G $.

\item
$ \Br{a \bullet \delta_{e}} \star_{\A} \Br{b \bullet \delta_{e}} = \Br{k \cdot a b} \bullet \delta_{e} $ for all $ a,b \in A $.

\item
$ \Br{a \bullet \delta_{e}}^{\ast_{\A}} = a^{\ast} \bullet \delta_{e} $ for all $ a \in A $.
\end{enumerate}
\end{Lem}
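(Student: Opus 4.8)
The plan is to verify all four identities by direct substitution into the discrete formulas for $ \star_{\A} $ and $ ^{\ast_{\A}} $ recorded at the start of this section, exploiting the fact that each function $ a \bullet \delta_{r} $ is supported at the single point $ r $, so that every sum over $ G $ collapses to a single term.

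For (1), I would evaluate $ \Br{a \bullet \delta_{r}} \star_{\A} \Br{b \bullet \delta_{s}} $ at an arbitrary $ x \in G $. In the defining sum $ \sum_{y \in G} k \cdot \func{f}{y} \, \alphArg{y}{\func{g}{y^{- 1} x}} \, \om{y}{y^{- 1} x} $ with $ f = a \bullet \delta_{r} $ and $ g = b \bullet \delta_{s} $, the factor $ \func{f}{y} $ vanishes unless $ y = r $, so only the term $ y = r $ survives, namely $ k \cdot a \, \alphArg{r}{\func{g}{r^{- 1} x}} \, \om{r}{r^{- 1} x} $; and $ \func{g}{r^{- 1} x} $ in turn vanishes unless $ r^{- 1} x = s $, i.e. $ x = r s $, in which case it equals $ b $. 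Hence $ \Br{a \bullet \delta_{r}} \star_{\A} \Br{b \bullet \delta_{s}} $ is the function supported at $ r s $ with value $ k \cdot a \, \alphArg{r}{b} \, \om{r}{s} $, which is precisely $ \Br{k \cdot a \alphArg{r}{b} \om{r}{s}} \bullet \delta_{r s} $. Item (2) is the same argument one step shorter: in $ \func{f^{\ast_{\A}}}{x} = \om{x}{x^{- 1}}^{\ast} \, \alphArg{x}{\func{f}{x^{- 1}}^{\ast}} $ with $ f = a \bullet \delta_{r} $, the factor $ \func{f}{x^{- 1}} $ vanishes unless $ x = r^{- 1} $, so $ f^{\ast_{\A}} $ is supported at $ r^{- 1} $ with value $ \om{r^{- 1}}{r}^{\ast} \, \alphArg{r^{- 1}}{a^{\ast}} $.

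Items (3) and (4) are then immediate specializations. Taking $ r = s = e $ in (1) and $ r = e $ in (2), and invoking property (5) of \autoref{Twisted C*-Dynamical System} --- that $ \alpha_{e} = \Id_{A} $ and $ \om{e}{e} = 1_{\M{A}} $ --- collapses the coefficients $ k \cdot a \, \alphArg{e}{b} \, \om{e}{e} $ and $ \om{e}{e}^{\ast} \, \alphArg{e}{a^{\ast}} $ to $ k \cdot a b $ and $ a^{\ast} $ respectively.

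I do not expect any genuine obstacle: the whole lemma is a routine unwinding of definitions. The only points needing a little care are bookkeeping ones --- confirming that each output is again an $ A $-valued (not merely $ \M{A} $-valued) function, which holds because in $ a \, \alphArg{r}{b} \, \om{r}{s} $ and $ \om{r^{- 1}}{r}^{\ast} \, \alphArg{r^{- 1}}{a^{\ast}} $ an element of $ A $ is multiplied by a unitary multiplier and so stays in $ A $; and correctly reading off the single group element ($ r s $, respectively $ r^{- 1} $) at which each resulting function is supported.
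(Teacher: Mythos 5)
Your proposal is correct and follows essentially the same route as the paper: collapse the defining sum/formula at the single support point to prove (1) and (2), then deduce (3) and (4) by specializing to $ r = s = e $ using Property (5) of \autoref{Twisted C*-Dynamical System}. Your added remark that the resulting values lie in $ A $ (not merely $ \M{A} $) because a unitary multiplier times an element of $ A $ stays in $ A $ is a sensible bookkeeping check the paper leaves implicit.
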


\begin{proof}
It suffices to prove (1) and (2), as (3) and (4) are direct consequences.\footnote{To prove (3) and (4), we also require the normalizations in Property (5) of \autoref{Twisted C*-Dynamical System}.}

Let $ a,b \in A $ and $ r,s \in G $. Then
\begin{align*}
\forall x \in G: \quad
    \FUNC{\Br{a \bullet \delta_{r}} \star_{\A} \Br{b \bullet \delta_{s}}}{x}
& = \sum_{y \in G} k \cdot \Func{a \bullet \delta_{r}}{y} \alphArg{y}{\Func{b \bullet \delta_{s}}{y^{- 1} x}} \om{y}{y^{- 1} x} \\
& = k \cdot a \alphArg{r}{\Func{b \bullet \delta_{s}}{r^{- 1} x}} \om{r}{r^{- 1} x} \\
& = \begin{cases}
    k \cdot a \alphArg{r}{b} \om{r}{s} & \text{if} ~ x = r s; \\
    0_{A}                              & \text{if} ~ x \in G \setminus \SSet{r s}
    \end{cases} \\
& = \FUNC{\Br{k \cdot a \alphArg{r}{b} \om{r}{s}} \bullet \delta_{r s}}{x}; \\
    \func{\Br{a \bullet \delta_{r}}^{\ast_{\A}}}{x}
& = \om{x}{x^{- 1}}^{\ast} \alphArg{x}{\Func{a \bullet \delta_{r}}{x^{- 1}}^{\ast}} \\
& = \begin{cases}
    \om{r^{- 1}}{r}^{\ast} \alphArg{r^{- 1}}{a^{\ast}} & \text{if} ~ x = r^{- 1}; \\
    0_{A}                                              & \text{if} ~ x \in G \setminus \SSet{r^{- 1}}
    \end{cases} \\
& = \FUNC{\om{r^{- 1}}{r}^{\ast} \alphArg{r^{- 1}}{a^{\ast}} \bullet \delta_{r^{- 1}}}{x}.
\end{align*}
This completes the proof.
\end{proof}



\begin{Thm}
Let $ \Pair{C}{\pi} $ be a $ \ast $-representation for $ \A $. Then $ \Pair{C}{\pi} $ is automatically contractive with respect to $ \Norm{\cdot}_{\A,1} $ and $ \Norm{\cdot}_{C} $.
\end{Thm}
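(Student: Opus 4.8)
The plan is to reduce the claimed $ \Norm{\cdot}_{\A,1} $-contractivity of an arbitrary $ \ast $-representation $ \Pair{C}{\pi} $ to its behaviour on the elementary functions $ a \bullet \delta_{r} $, and then to handle those with the $ C^{\ast} $-identity in $ C $ and the standard fact that a $ \ast $-homomorphism between $ C^{\ast} $-algebras is norm-decreasing (a fact whose proof does not presuppose any continuity, so there is no circularity). First, since $ G $ is discrete, every $ f \in \Cc{G,A} $ has finite support and equals the finite sum $ \sum_{r \in G} \func{f}{r} \bullet \delta_{r} $; because $ \pi $ is linear and $ \Norm{f}_{\A,1} = k \sum_{r \in G} \Norm{\func{f}{r}}_{A} $, the triangle inequality reduces the theorem to the single estimate $ \Norm{\func{\pi}{a \bullet \delta_{r}}}_{C} \leq k \Norm{a}_{A} $ for all $ a \in A $ and $ r \in G $.

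Next I would dispose of the case $ r = e $. Consider the map $ \rho \colon A \to C $ defined by $ \func{\rho}{a} = \frac{1}{k} \func{\pi}{a \bullet \delta_{e}} $; it is linear, and parts (3) and (4) of \autoref{Important Convolutive and Involutive Identities}, together with $ \func{\pi}{g \star_{\A} h} = \func{\pi}{g} \func{\pi}{h} $ and $ \func{\pi}{g^{\ast_{\A}}} = \func{\pi}{g}^{\ast} $, show at once that $ \func{\rho}{a b} = \func{\rho}{a} \func{\rho}{b} $ and $ \func{\rho}{a^{\ast}} = \func{\rho}{a}^{\ast} $. Thus $ \rho $ is a $ \ast $-homomorphism of $ C^{\ast} $-algebras, hence contractive, and the estimate holds for $ r = e $.

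For general $ r $, I would invoke the $ C^{\ast} $-identity in $ C $ to write $ \Norm{\func{\pi}{a \bullet \delta_{r}}}_{C}^{2} = \Norm{\func{\pi}{\Br{a \bullet \delta_{r}}^{\ast_{\A}} \star_{\A} \Br{a \bullet \delta_{r}}}}_{C} $, and then use parts (1) and (2) of \autoref{Important Convolutive and Involutive Identities} to evaluate $ \Br{a \bullet \delta_{r}}^{\ast_{\A}} \star_{\A} \Br{a \bullet \delta_{r}} = \Br{k \cdot c} \bullet \delta_{e} $, where $ c = \om{r^{- 1}}{r}^{\ast} \alphArg{r^{- 1}}{a^{\ast} a} \om{r^{- 1}}{r} \in A $. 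Since $ \om{r^{- 1}}{r} $ is unitary in $ \M{A} $ (so conjugation by it is isometric and preserves $ A $) and $ \alpha_{r^{- 1}} $ is an isometric $ \ast $-automorphism, one gets $ \Norm{c}_{A} = \Norm{a^{\ast} a}_{A} = \Norm{a}_{A}^{2} $, whence by the $ r = e $ case $ \Norm{\func{\pi}{a \bullet \delta_{r}}}_{C}^{2} = k \Norm{\func{\pi}{c \bullet \delta_{e}}}_{C} = k^{2} \Norm{\func{\rho}{c}}_{C} \leq k^{2} \Norm{a}_{A}^{2} $, which is the desired estimate. The step I would flag as the crux is conceptual rather than technical: recognizing that the pieces $ a \bullet \delta_{r} $ can be analyzed by the $ C^{\ast} $-identity purely algebraically and that the norm computation for $ c $ is controlled; the only genuine care needed is the bookkeeping with the multiplier $ \om{r^{- 1}}{r} $ (that $ c $ lands in $ A $ and that its norm is unchanged) and the observation that the contractivity of $ \ast $-homomorphisms used here is proved without any recourse to the conclusion — so I do not anticipate a serious obstacle.
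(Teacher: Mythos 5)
Your proof is correct and follows essentially the same route as the paper: decompose $ f $ into the finitely many pieces $ \func{f}{r} \bullet \delta_{r} $, bound each piece by combining the $ C^{\ast} $-identity with the automatic contractivity of the $ \ast $-homomorphism $ a \mapsto \frac{1}{k} \func{\pi}{a \bullet \delta_{e}} $, and sum. The only difference is organizational: the paper transfers $ \Norm{\func{\pi}{a \bullet \delta_{r}}}_{C} $ to $ \Norm{\func{\pi}{\alphArg{r^{-1}}{a} \bullet \delta_{e}}}_{C} $ through two norm identities and a family of homomorphisms $ \rho_{r} $, whereas you apply the $ C^{\ast} $-identity once and directly compute the norm of the resulting element $ c \in A $ --- a slightly more economical packaging of the same ideas.
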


\begin{proof}
We will omit the subscript $ \A $ in $ \star_{\A} $, $ ^{\ast_{\A}} $, and $ \Norm{\cdot}_{\A,1} $ to ease notation.

Let $ a \in A $ and $ r \in G $. By \autoref{Important Convolutive and Involutive Identities}, we have
\begin{align*}
    \Br{a \bullet \delta_{r}}^{\ast} \star \Br{a \bullet \delta_{r}}
& = \Br{\om{r^{- 1}}{r}^{\ast} \alphArg{r^{- 1}}{a^{\ast}} \bullet \delta_{r^{- 1}}} \star \Br{a \bullet \delta_{r}} \\
& = \Br{k \cdot \om{r^{- 1}}{r}^{\ast} \alphArg{r^{- 1}}{a^{\ast}} \alphArg{r^{- 1}}{a} \om{r^{- 1}}{r}} \bullet \delta_{r^{- 1} r} \\
& = \Br{k \cdot \om{r^{- 1}}{r}^{\ast} \alphArg{r^{- 1}}{a^{\ast}} \alphArg{r^{- 1}}{a} \om{r^{- 1}}{r}} \bullet \delta_{e} \\
& = \Br{\om{r^{- 1}}{r}^{\ast} \alphArg{r^{- 1}}{a^{\ast}} \bullet \delta_{e}} \star \Br{\alphArg{r^{- 1}}{a} \om{r^{- 1}}{r} \bullet \delta_{e}} \\
& = \Br{\om{r^{- 1}}{r}^{\ast} \alphArg{r^{- 1}}{a}^{\ast} \bullet \delta_{e}} \star \Br{\alphArg{r^{- 1}}{a} \om{r^{- 1}}{r} \bullet \delta_{e}} \\
& = \Br{\SqBr{\alphArg{r^{- 1}}{a} \om{r^{- 1}}{r}}^{\ast} \bullet \delta_{e}} \star \Br{\alphArg{r^{- 1}}{a} \om{r^{- 1}}{r} \bullet \delta_{e}} \\
& = \Br{\alphArg{r^{- 1}}{a} \om{r^{- 1}}{r} \bullet \delta_{e}}^{\ast} \star \Br{\alphArg{r^{- 1}}{a} \om{r^{- 1}}{r} \bullet \delta_{e}}.
\end{align*}
Applying $ \pi $ to both ends and then using the $ C^{\ast} $-norm identity yields
\begin{equation} \label{First Norm Identity}
\Norm{\func{\pi}{a \bullet \delta_{r}}}_{C} = \Norm{\func{\pi}{\alphArg{r^{- 1}}{a} \om{r^{- 1}}{r} \bullet \delta_{e}}}_{C}.
\end{equation}
By \autoref{Important Convolutive and Involutive Identities} again, we have
\begin{align*}
  & ~ \Br{\alphArg{r^{- 1}}{a} \om{r^{- 1}}{r} \bullet \delta_{e}} \star \Br{\alphArg{r^{- 1}}{a} \om{r^{- 1}}{r} \bullet \delta_{e}}^{\ast} \\
= & ~ \Br{\alphArg{r^{- 1}}{a} \om{r^{- 1}}{r} \bullet \delta_{e}} \star \Br{\SqBr{\alphArg{r^{- 1}}{a} \om{r^{- 1}}{r}}^{\ast} \bullet \delta_{e}} \\
= & ~ \Br{\alphArg{r^{- 1}}{a} \om{r^{- 1}}{r} \bullet \delta_{e}} \star \Br{\om{r^{- 1}}{r}^{\ast} \alphArg{r^{- 1}}{a}^{\ast} \bullet \delta_{e}} \\
= & ~ \Br{k \cdot \alphArg{r^{- 1}}{a} \om{r^{- 1}}{r} \om{r^{- 1}}{r}^{\ast} \alphArg{r^{- 1}}{a}^{\ast}} \bullet \delta_{e} \\
= & ~ \Br{k \cdot \alphArg{r^{- 1}}{a} \alphArg{r^{- 1}}{a}^{\ast}} \bullet \delta_{e} \qquad
      \Br{\text{As $ \om{r^{- 1}}{r} $ is unitary.}} \\
= & ~ \Br{\alphArg{r^{- 1}}{a} \bullet \delta_{e}} \star \Br{\alphArg{r^{- 1}}{a}^{\ast} \bullet \delta_{e}} \\
= & ~ \Br{\alphArg{r^{- 1}}{a} \bullet \delta_{e}} \star \Br{\alphArg{r^{- 1}}{a} \bullet \delta_{e}}^{\ast}.
\end{align*}
Applying $ \pi $ to both ends and then using the $ C^{\ast} $-norm identity again yields
\begin{equation} \label{Second Norm Identity}
\Norm{\func{\pi}{\alphArg{r^{- 1}}{a} \om{r^{- 1}}{r} \bullet \delta_{e}}}_{C} = \Norm{\func{\pi}{\alphArg{r^{- 1}}{a} \bullet \delta_{e}}}_{C}.
\end{equation}
As $ a \in A $ and $ r \in G $ are arbitrary, we see from (\ref{First Norm Identity}) and (\ref{Second Norm Identity}) that
\begin{equation} \label{Main Norm Identity}
\forall a \in A, ~ \forall r \in G: \quad
\Norm{\func{\pi}{a \bullet \delta_{r}}}_{C} = \Norm{\func{\pi}{\alphArg{r^{- 1}}{a} \bullet \delta_{e}}}_{C}.
\end{equation}

Now, define for each $ r \in G $ a linear map $ \rho_{r}: A \to \Cc{G,A} $ by
$$
\forall a \in A: \quad
\func{\rho_{r}}{a} \df \Br{\frac{1}{k} \cdot \alphArg{r^{- 1}}{a}} \bullet \delta_{e}.
$$
Notice that \autoref{Important Convolutive and Involutive Identities} also gives us the following relations:
\begin{align*}
\forall a,b \in A, ~ \forall r \in G: \quad
    \func{\rho_{r}}{a b}
& = \Br{\frac{1}{k} \cdot \alphArg{r^{- 1}}{a b}} \bullet \delta_{e} \\
& = \Br{\frac{1}{k} \cdot \alphArg{r^{- 1}}{a} \alphArg{r^{- 1}}{b}} \bullet \delta_{e} \\
& = \frac{1}{k^{2}} \cdot \SqBr{\Br{k \cdot \alphArg{r^{- 1}}{a} \alphArg{r^{- 1}}{b}} \bullet \delta_{e}} \\
& = \frac{1}{k^{2}} \cdot \SqBr{\Br{\alphArg{r^{- 1}}{a} \bullet \delta_{e}} \star \Br{\alphArg{r^{- 1}}{b} \bullet \delta_{e}}} \\
& = \SqBr{\Br{\frac{1}{k} \cdot \alphArg{r^{- 1}}{a}} \bullet \delta_{e}} \star \SqBr{\Br{\frac{1}{k} \cdot \alphArg{r^{- 1}}{b}} \bullet \delta_{e}} \\
& = \func{\rho_{r}}{a} \star \func{\rho_{r}}{b}; \\
    \func{\rho_{r}}{a^{\ast}}
& = \Br{\frac{1}{k} \cdot \alphArg{r^{- 1}}{a^{\ast}}} \bullet \delta_{e} \\
& = \Br{\frac{1}{k} \cdot \alphArg{r^{- 1}}{a}^{\ast}} \bullet \delta_{e} \\
& = \Br{\SqBr{\frac{1}{k} \cdot \alphArg{r^{- 1}}{a}}^{\ast}} \bullet \delta_{e} \\
& = \SqBr{\Br{\frac{1}{k} \cdot \alphArg{r^{- 1}}{a}} \bullet \delta_{e}}^{\ast} \\
& = \func{\rho_{r}}{a}^{\ast}.
\end{align*}
Hence, $ \rho_{r} $ is an algebraic $ \ast $-homomorphism from $ A $ to $ \Trip{\Cc{G,A}}{\star}{^{\ast}} $ for all $ r \in G $, which makes $ \pi \circ \rho_{r} $ an algebraic $ \ast $-homomorphism from $ A $ to $ C $. As any algebraic $ \ast $-homomorphism from one $ C^{\ast} $-algebra to another is already contractive, we have
$$
\forall a \in A, ~ \forall r \in G: \quad
\Norm{\func{\pi}{\Br{\frac{1}{k} \cdot \alphArg{r^{- 1}}{a}} \bullet \delta_{e}}}_{C} \leq \Norm{a}_{A},
$$
or equivalently,
\begin{equation} \label{First Norm Inequality}
\forall a \in A, ~ \forall r \in G: \quad
\Norm{\func{\pi}{\alphArg{r^{- 1}}{a} \bullet \delta_{e}}}_{C} \leq k \Norm{a}_{A}.
\end{equation}
Combining (\ref{Main Norm Identity}) and (\ref{First Norm Inequality}) thus gives us
\begin{equation} \label{Main Norm Inequality}
\forall a \in A, ~ \forall r \in G: \quad
\Norm{\func{\pi}{a \bullet \delta_{r}}}_{C} \leq k \Norm{a}_{A}.
\end{equation}

Finally, let $ f \in \Cc{G,A} $. Then
\begin{align*}
       \Norm{\func{\pi}{f}}_{C}
& =    \Norm{\func{\pi}{\sum_{r \in G} \func{f}{r} \bullet \delta_{r}}}_{C} \\
& =    \Norm{\sum_{r \in G} \func{\pi}{\func{f}{r} \bullet \delta_{r}}}_{C} \\
& \leq \sum_{r \in G} \Norm{\func{\pi}{\func{f}{r} \bullet \delta_{r}}}_{C} \\
& \leq \sum_{r \in G} k \Norm{\func{f}{r}}_{A} \qquad
       \Br{\text{By (\ref{Main Norm Inequality}).}} \\
& =    \Norm{f}_{1}.
\end{align*}
Therefore, $ \pi $ is automatically contractive with respect to $ \Norm{\cdot}_{1} $ and $ \Norm{\cdot}_{C} $.
\end{proof}



\begin{Rmk}
If $ \pi $ is merely an algebraic homomorphism that does not respect the involution $ ^{\ast_{\A}} $, then continuity may fail spectacularly. Consider $ \A = \Quad{\Z}{\C}{\alpha^{0}}{\omega^{0}} $, where $ \alpha^{0} $ and $ \omega^{0} $ denote, respectively, the trivial action and the trivial multiplier. Suppose that $ \Z $ is equipped with the counting measure. Then the map
$$
\Map{\Cc{\Z}}{\C}{f}{\sum_{n \in \Z} \func{f}{n} e^{n}}
$$
is an unbounded algebraic homomorphism from $ \Pair{\Cc{\Z}}{\star_{\A}} $ to $ \C $, as $ \Norm{\delta_{n}}_{\A,1} = 1 $ for all $ n \in \Z $ but $ \D \lim_{n \to \infty} e^{n} = \infty $. It clearly \emph{does not} respect the involution $ ^{\ast_{\A}} $.
\end{Rmk}



\section{Conclusions}


The proof of the main result does not apply to other classes of locally compact Hausdorff groups, such as the abelian ones or the compact ones. One might work first on group $ C^{\ast} $-algebras instead of more general twisted $ C^{\ast} $-dynamical systems. Hopefully, the Peter-Weyl Theorem for compact groups and the Fourier transform for abelian groups could find a use, as they exploit the structure of these groups.



\section{Acknowledgments}


The author wishes to thank Dana Williams for helpful comments on this topic.





\begin{thebibliography}{99}

\bibitem{BS}
{R. Brown \& H. Smith},
{\em Representations of Twisted Group Algebras},
{Trans. Amer. Math. Soc.},
{\bf 149} (1970), 503--537.

\bibitem{W}
{D. Williams},
{\em Crossed Products of $ C^{\ast} $-Algebras},
Math. Surveys Monographs, vol. 134,
Amer. Math. Soc., Providence, RI (2007).

\end{thebibliography}
\end{document}